\documentclass[11pt,reqno]{amsart}
\usepackage[a4paper,margin=27mm]{geometry}
\usepackage[T1]{fontenc}
\usepackage{lmodern,amsmath,amssymb,amsthm,mathtools,microtype}
\usepackage{xcolor}
\definecolor{linkblue}{RGB}{25,58,108}
\usepackage[colorlinks=true,linkcolor=linkblue,citecolor=linkblue,urlcolor=linkblue]{hyperref}
\newtheorem{theorem}{Theorem}[section]
\newtheorem{lemma}[theorem]{Lemma}

\newtheorem{corollary}[theorem]{Corollary}
\theoremstyle{remark}
\newcommand{\E}{\mathbb E}
\newcommand{\R}{\mathbb R}
\newcommand{\dd}{\,\mathrm d}
\newcommand{\CJ}{\mathrm{CJ}}
\newcommand{\HP}{\mathrm{HP}}
\newcommand{\Sine}{\mathsf{Sine}}

\title[Critical and subcritical fusion for Sine beta]{Critical and subcritical fusion asymptotics\\for $\Sine_\beta$ correlation functions}
\author{Weiyang Fang}
\address{Independent Researcher, Tokyo, Japan}
\email{weiyang.fang@hotmail.com}
\date{September 7, 2026}
\subjclass[2020]{60B20, 60G55, 82B21}
\keywords{Sine beta process, correlation functions, fusion asymptotics, circular Jacobi ensemble, stochastic zeta function, critical logarithm}
\begin{document}
\begin{abstract}
We determine the first correction to the leading Vandermonde fusion law for the correlation functions of the $\Sine_\beta$ process in the critical and subcritical regimes $m\beta\leq1$. When $m\beta<1$, the normalized correction is of order $|\varepsilon|^{1+m\beta}$, with a strictly negative coefficient given by an explicit gamma-function ratio times an absolutely convergent arithmetic--geometric mean deficit integral. At $m\beta=1$, it is
\[
 -\frac{\sum_{i<j}(a_i-a_j)^2}{8m^2(2m+1)}
      \varepsilon^2\log(1/|\varepsilon|)+O(\varepsilon^2).
\]
For two merging points, the subcritical coefficient reduces to a gamma-function expression involving $\sec(\pi\beta)-1$, and the critical logarithmic coefficient is $-1/160$. The argument starts from a geometric interpolation of circular-Jacobi weights. Selecting one particle in the interpolation derivative increases the fused charge by $\beta$ and leaves a strict positive-power moment margin in the remaining stochastic-zeta expectation. This yields an absolutely convergent one-particle identity and uniform control at the collision scale. The results resolve the critical and subcritical conjecture in the author's earlier preprint and complement its supercritical second-order expansion.
\end{abstract}
\maketitle

\section{Introduction and main results}\label{sec:intro}
The $\Sine_\beta$ process is the translation-invariant bulk limit of one-dimensional beta ensembles. Its Brownian-carousel and random-operator descriptions were developed by Valk\'o and Vir\'ag \cite{VV09,VV17}; the circular-Jacobi limits of Li and Valk\'o \cite{LV} provide the corresponding Hua--Pickrell environments and normalized characteristic polynomials. We use the normalization in which the intensity of $\Sine_\beta$ is $1/(2\pi)$.

For general $\beta>0$, Qu and Valk\'o \cite{QV} obtained a stochastic representation of the pair correlation. Assiotis and Najnudel \cite{AN} subsequently expressed every correlation function in terms of the Hua--Pickrell stochastic zeta function and obtained its leading behavior when several arguments merge. Specifically, their formula is
\begin{equation}\label{eq:ANcorrelation}
 \rho_\beta^{(m)}(x_1,\ldots,x_m)
 =C_\beta^{(m)}\prod_{i<j}|x_i-x_j|^\beta
       \E\prod_{j=2}^m|\xi^{\beta,m\beta/2}(x_j-x_1)|^\beta.
\end{equation}
The normalized expectation tends to one at a full collision. The next term is sensitive to the integrability of the fused environment near the collision point.

In the earlier preprint \cite{Fang}, the author computed this next term in the supercritical range $m\beta>1$ and proposed a transition at $m\beta=1$. The supercritical coefficient has a simple pole at the threshold, where the inverse-square moment used in that argument ceases to be finite. The present paper treats the two complementary regimes. Its new contributions are the exact subcritical coefficient, the critical logarithm with an $O(\varepsilon^2)$ remainder, and a finite-dimensional interpolation identity that supplies the necessary localization directly. The estimates used below are derived from the circular-Jacobi model and the stochastic-zeta bounds of \cite{AN}; the supercritical theorem of \cite{Fang} is used only for comparison.

Fix $m\geq2$, $\beta>0$, and a pairwise distinct real profile $a=(a_1,\ldots,a_m)$. Write
\begin{equation}\label{eq:geometry}
 \alpha=m\beta,\qquad \bar a=\frac1m\sum_i a_i,\qquad
 b_i=a_i-\bar a,\qquad
 B=\sum_i b_i^2=\frac{V(a)}m,
\end{equation}
where $V(a)=\sum_{i<j}(a_i-a_j)^2$. Define
\begin{equation}\label{eq:R}
 R_{\beta,m}(\varepsilon;a)=
 \frac{\rho_\beta^{(m)}(\varepsilon a_1,\ldots,\varepsilon a_m)}
 {C_\beta^{(m)}|\varepsilon|^{\beta\binom m2}\prod_{i<j}|a_i-a_j|^\beta}.
\end{equation}
Translation and reflection invariance allow us to center the profile and take $\varepsilon>0$ in the proof. The constant that will enter the answer is
\begin{equation}\label{eq:kappa}
 \kappa_{\beta,m}=
 \frac{(\beta/2)^{m\beta}\Gamma(1+\beta/2)\Gamma(1+m\beta/2)^2}
 {2\pi\Gamma(1+m\beta)\Gamma(1+(m+\tfrac12)\beta)}
 =\frac{C_\beta^{(m+1)}}{C_\beta^{(m)}}.
\end{equation}
The normalization is derived in Section~\ref{sec:normalization}.
\begin{theorem}[Critical and subcritical fusion]\label{thm:main}
If $0<\alpha<1$, set
\begin{equation}\label{eq:J}
 \mathcal J_{\beta,m}(a)=\int_\R
 \left\{\frac1m\sum_{i=1}^m |u-a_i|^{m\beta}
             -\prod_{i=1}^m|u-a_i|^\beta\right\}\dd u.
\end{equation}
The integral is absolutely convergent and strictly positive. As $\varepsilon\to0$,
\begin{equation}\label{eq:sub}
 R_{\beta,m}(\varepsilon;a)
 =1-\kappa_{\beta,m}\mathcal J_{\beta,m}(a)|\varepsilon|^{1+m\beta}
     +o(|\varepsilon|^{1+m\beta}).
\end{equation}
If $\alpha=1$, equivalently $\beta=1/m$, then
\begin{equation}\label{eq:crit}
 R_{1/m,m}(\varepsilon;a)
 =1-\frac{V(a)}{8m^2(2m+1)}\varepsilon^2\log\frac1{|\varepsilon|}
     +O(\varepsilon^2).
\end{equation}
The remainders are locally uniform in the collision profile. The integral in \eqref{eq:J} is translation invariant and homogeneous of degree $1+m\beta$ under nonzero real scaling of the profile.
\end{theorem}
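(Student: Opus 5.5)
We will work at finite $N$ with the circular Jacobi ensemble, where the $m$-point correlation is an explicit Selberg-type expectation, and pass to the limit $N\to\infty$ only at the very end.

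\textbf{Interpolation identity.} Conditioning on $m$ eigenangles placed near $0$ at positions $\varepsilon a_i/N$, the remaining $N-m$ particles carry the weight $\prod_i|e^{i\theta}-e^{i\varepsilon a_i/N}|^\beta$. The leading term corresponds to the fully fused weight $|e^{i\theta}-1|^{m\beta}$, which is the circular-Jacobi weight of charge $m\beta/2$. We interpolate geometrically:
\[
w_t(\theta)=|e^{i\theta}-1|^{m\beta(1-t)}\prod_i|e^{i\theta}-e^{i\varepsilon a_i/N}|^{\beta t}.
\]
Then $\log R$ is $\int_0^1\partial_t\log\E_t[\cdots]\,dt$, and $\partial_t$ produces a sum over particles of the logarithmic ratio $\ell(\theta)=\sum_i\beta\log|e^{i\theta}-e^{i\varepsilon a_i/N}|-m\beta\log|e^{i\theta}-1|$.

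By exchangeability, the sum over particles becomes $N$ times a one-particle expectation. We select that particle and integrate it out against the one-point density of the $t$-interpolated ensemble. After the scaling $\theta=\varepsilon u/N$, the factor $|u|^{m\beta}$ coming from the fused charge combines with the selected particle's own Vandermonde factor, so the remaining expectation is that of an environment with charge raised by $\beta$, i.e.\ $(m+1)\beta/2$. This is the source of $\kappa_{\beta,m}=C^{(m+1)}_\beta/C^{(m)}_\beta$, the normalization derived in Section~\ref{sec:normalization}. The remaining stochastic-zeta expectation then involves $|\xi|$ to a power leaving a positive moment margin, so the AN moment bounds give domination uniformly in $N$, in $t$, and in the scale $u$.

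\textbf{Subcritical regime ($\alpha<1$).} In the variable $u$, the one-particle integrand becomes $\int_0^1 \frac{d}{dt}\bigl(|u|^{m\beta(1-t)}\prod|u-a_i|^{\beta t}\bigr)dt$ times the environment expectation. The environment expectation tends to $1$ near the collision scale, and the $t$-integral telescopes to $\prod|u-a_i|^\beta-|u|^{m\beta}$. The total prefactor is $\varepsilon^{1+m\beta}$: one power of $\varepsilon$ from $du$ and $m\beta$ from homogeneity.

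This integral does not converge as written, since it behaves like $|u|^{m\beta-2}$ at infinity after cancellation, and its first-order term is odd. We therefore replace $|u|^{m\beta}$ by the centered average $\frac1m\sum|u-a_i|^{m\beta}$. This is legitimate because the difference integrates to zero by translation of $u$, which must be done carefully, for instance by truncating at $|u|\le L$ and letting $L\to\infty$. The integrand of $\mathcal J$ decays like $|u|^{m\beta-2}$, which is integrable since $\alpha<1$. It is positive by AM--GM, $\prod x_i^{\beta}\le\frac1m\sum x_i^{m\beta}$, with equality not holding identically because the $a_i$ are distinct. Contributions from $|u|\gg 1/\varepsilon$ are controlled by the moment margin and are $o(\varepsilon^{1+m\beta})$. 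Translation invariance and homogeneity of $\mathcal J$ are immediate by substitution.

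\textbf{Critical regime ($\alpha=1$).} At $m\beta=1$, the tail $\int^{1/\varepsilon}|u|^{-1}du$ produces $\log(1/\varepsilon)$. Its coefficient comes from the second-order expansion of the centered integrand at large $u$: with $\sum b_i=0$, one has $\frac1m\sum|u-b_i|-\prod|u-b_i|^{1/m}\sim c\,B/|u|$. Expanding $\prod(1-b_i/u)^{1/m}$ and $(1-b_i/u)^{1}$ to second order gives $c=\frac{1}{2m}\bigl(1-\tfrac1m\bigr)+\ldots$. This must be matched with $\kappa_{1/m,m}$ so that the product equals $\frac{V}{8m^2(2m+1)}$, using $B=V/m$ and $\Gamma$ identities at $\beta=1/m$. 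The $O(\varepsilon^2)$ remainder requires the environment expectation at macroscopic $u$ to have error terms integrable against $|u|^{-1}$ without an additional logarithm. This needs a quantitative rate, $1-\E\approx O(|\varepsilon u|^{\delta})$ for small $\varepsilon u$, taken from the AN bounds.

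\textbf{Main obstacle.} The hardest step is the uniform domination of the selected-particle expectation across all scales simultaneously: uniformly in $N$, in $t\in[0,1]$, and in $u$ up to the macroscopic range. This is what justifies exchanging $N\to\infty$, the $t$-integral, and the $u$-integral. My first attempt will be to bound the $t$-interpolated expectation by Hölder between the endpoint charges $m\beta/2$ and $(m+1)\beta/2$, using the positive-power moment margin.
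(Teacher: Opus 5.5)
Your architecture matches the paper's: geometric interpolation of the environment weights, selection of one particle so that the reference tilt becomes $(m+1)\beta$ and $\kappa_{\beta,m}=C^{(m+1)}_\beta/C^{(m)}_\beta$ appears, telescoping of the interpolation integral to $P-|u|^{m\beta}$, truncation plus AM--GM for $\mathcal J_{\beta,m}$, and the $1/|u|$ tail at $m\beta=1$. The subcritical step nevertheless fails as you set it up, because you fuse at $0$ while keeping the \emph{uncentered} profile $a$.

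Write $t=\varepsilon$ for the scale and $s$ for the interpolation parameter (your $t$). The far field is then first order: $L_t(x)=\beta\sum_i\log|1-ta_i/x|=-m\beta\,t\bar a/x+O(t^2/x^2)$. So even with the decay $K_{s,t}(x)\lesssim|x|^{-m\beta}$, the integrand $W_{s,t}L_tK_{s,t}$ is only controlled by $t/|x|$ at macroscopic $x$, which is not integrable. After $x=tu$, the limit integrand decays only like $|u|^{m\beta-1}$, so there is no majorant for dominated convergence. On a fixed range $r<|x|<1$ each side contributes exactly order $t$, far larger than $t^{1+m\beta}$. Your claim that $|u|\gg1/\varepsilon$ contributes $o(\varepsilon^{1+m\beta})$ would therefore need a reflection-cancellation argument that you do not give.

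Replacing $|u|^{m\beta}$ by $\frac1m\sum_i|u-a_i|^{m\beta}$ does not repair this. That substitution is legitimate only for the limiting integrand with $K\equiv1$, since $K_{s,t}(tu)$ is not translation invariant in $u$. It is precisely the passage to that limit that requires the majorant. The fix is to center first, using translation invariance of $\rho^{(m)}_\beta$, so that the fused charge sits at the barycenter. Then $P(u)-|u|^{m\beta}=O(|u|^{m\beta-2})$ is integrable, and the conversion to $-\mathcal J_{\beta,m}$ is done only after the limit, as in the paper.

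Once you center, the uniform domination you call the main obstacle has a short exact solution that your proposal lacks. Differentiate $R_{N,s}(t)$ itself, not $\log R$. Then for every $s$ the selected-particle expectation is taken under the fixed law $\CJ_{N-1,\beta,(\alpha+\beta)/2}$, of normalized-polynomial powers with total exponent $\alpha=m\beta$. No $s$-dependent ensemble or H\"older interpolation between endpoint charges is involved. Rotation invariance gives $\E_{\CJ_{n,\beta,q/2}}|q_n(e^{iu})|^q=1$ (Lemma~\ref{lem:rotation}), so H\"older with $q=\alpha+\beta$ yields $K_{N,s,t}\le1$ for all $N,s,t,x$. Because $\log(P/|u|^\alpha)$ has one sign at each $u$, $\int_0^1W_{s,1}(u)|L_1(u)|\dd s=|P(u)-|u|^\alpha|$. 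With $K\le1$, this is the entire dominated-convergence argument for $\alpha<1$. The AN decay $K\le C/(1+|x|^\alpha)$ is needed only for the $N\to\infty$ passage and at criticality.

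At criticality, your unspecified rate comes from $\E\sup_{|z|\le r_1}|\zeta'(z)|<\infty$ (Lemma~\ref{lem:compact} with $1<q<1+\beta$, again the moment margin). Combine this with the fact that a weighted geometric mean with weights summing to one lies between its smallest and largest entries. This gives $|K_{s,t}(x)-1|\le C(|x|+t)$; any positive H\"older rate would also suffice.

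Finally, your tail constant should be $c=\frac1{2m}$. At $\alpha=1$ one has $\frac1m\sum_i|u-b_i|=|u|$ exactly for large $|u|$. The cross terms $\sum_{i<j}b_ib_j=-B/2$ in $\prod_i(1-b_i/u)^{1/m}$ supply the missing $\frac1{2m^2}$. Counting both signs of $u$ and using $\kappa_{1/m,m}=\frac1{8(2m+1)}$ gives $V(a)/(8m^2(2m+1))$.
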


\begin{corollary}[Two-point fusion]\label{cor:pair}
For $0<\beta<1/2$, put
\begin{equation}\label{eq:D}
 D_\beta=\kappa_{\beta,2}
 \frac{\Gamma(1+\beta)^2}{\Gamma(2+2\beta)}
                    \bigl(\sec(\pi\beta)-1\bigr)>0.
\end{equation}
Then
\begin{equation}\label{eq:pairsub}
 \rho_\beta^{(2)}(0,x)
 =C_\beta^{(2)}|x|^\beta
       \left(1-D_\beta|x|^{1+2\beta}+o(|x|^{1+2\beta})\right).
\end{equation}
At $\beta=1/2$,
\begin{equation}\label{eq:paircrit}
 \rho_{1/2}^{(2)}(0,x)
 =C_{1/2}^{(2)}|x|^{1/2}
       \left(1-\frac{x^2}{160}\log\frac1{|x|}+O(x^2)\right).
\end{equation}
\end{corollary}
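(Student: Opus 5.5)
The plan is to specialize Theorem~\ref{thm:main} to $m=2$ with the profile $a=(0,1)$ and $\varepsilon=x$. By translation and reflection invariance we may take $x>0$. Then $\rho_\beta^{(2)}(0,x)=C_\beta^{(2)}|x|^\beta R_{\beta,2}(x;(0,1))$, since $\prod_{i<j}|a_i-a_j|^\beta=1$.

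\emph{Critical case.} At $\beta=1/2$ we have $\alpha=2\beta=1$ and $V(a)=(0-1)^2=1$, so \eqref{eq:crit} gives the coefficient $1/(8\cdot 4\cdot 5)=1/160$. This is \eqref{eq:paircrit}.

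\emph{Subcritical case.} For $0<\beta<1/2$, \eqref{eq:sub} gives \eqref{eq:pairsub} with $D_\beta=\kappa_{\beta,2}\mathcal J_{\beta,2}(0,1)$. It therefore remains to evaluate
\[
\mathcal J_{\beta,2}(0,1)=\int_\R\Bigl\{\tfrac12\bigl(|u|^{2\beta}+|u-1|^{2\beta}\bigr)-|u|^\beta|u-1|^\beta\Bigr\}\dd u
=\frac12\int_\R\bigl(|u|^\beta-|u-1|^\beta\bigr)^2\dd u .
\]
The integrand is $O(|u|^{2\beta-2})$ at infinity, so the integral converges absolutely. Positivity is then immediate, which also gives $D_\beta>0$ once the closed form is known.

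To compute it, I would truncate to $[-L,L]$ and expand the square. This gives two pure-power integrals and the cross term $\int_{-L}^{L}|u|^\beta|u-1|^\beta\dd u$. I would split the cross term over the ranges $u<0$, $0<u<1$ and $u>1$. The middle range gives $B(1+\beta,1+\beta)$ exactly. On each outer range the integrand equals $|u|^{2\beta}$ plus a tail, so it can be written as $\int|u|^{2\beta}$ plus a convergent remainder. The growing pieces $L^{1+2\beta}/(1+2\beta)$ then cancel against the pure-power terms. The remaining constants are the analytic continuations of $B(1+\beta,-1-2\beta)=\Gamma(1+\beta)\Gamma(-1-2\beta)/\Gamma(-\beta)$, one for each outer range.

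Equivalently, the continuation that makes ``$\int_\R|u|^{2\beta}\dd u=0$'' and $I(s,t)=\int_\R|u|^s|u-1|^t\dd u=B(1+s,1+t)+B(1+t,-1-s-t)+B(1+s,-1-s-t)$ legitimate can be justified by the explicit truncated computation just described. It leads to
\[
\mathcal J_{\beta,2}(0,1)=-\Bigl[B(1+\beta,1+\beta)+2\,\frac{\Gamma(1+\beta)\Gamma(-1-2\beta)}{\Gamma(-\beta)}\Bigr].
\]
The reflection formula gives $\Gamma(-1-2\beta)\Gamma(2+2\beta)=\pi/\sin(2\pi\beta)$ and $\Gamma(-\beta)\Gamma(1+\beta)=-\pi/\sin(\pi\beta)$. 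Hence the second term equals $-\frac{\Gamma(1+\beta)^2}{\Gamma(2+2\beta)}\sec(\pi\beta)$, and therefore $\mathcal J_{\beta,2}(0,1)=\frac{\Gamma(1+\beta)^2}{\Gamma(2+2\beta)}(\sec(\pi\beta)-1)$. This is the formula \eqref{eq:D}.

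The only delicate step is the rigorous bookkeeping of the divergent pieces in this Beta-function evaluation. As an independent check I would also use Plancherel. The Fourier transform of $|u|^\beta-|u-1|^\beta$ is $c_\beta|\xi|^{-1-\beta}(1-e^{-i\xi})$, which turns the integral into the standard fractional-Brownian-motion normalization constant and should reproduce the same gamma expression.
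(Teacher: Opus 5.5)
Your proposal is correct and follows the paper's proof. You specialize Theorem~\ref{thm:main} to $m=2$, $a=(0,1)$, which gives $1/160$ at $\beta=1/2$, and you evaluate $\mathcal J_{\beta,2}(0,1)=\tfrac12\int_\R(|u|^\beta-|u-1|^\beta)^2\dd u$ using the middle Beta integral, the finite-part integrals $\Gamma(1+\beta)\Gamma(-1-2\beta)/\Gamma(-\beta)$ on the two outer rays, and the reflection formula; the only difference is that the paper splits the squared integrand at $0,1$ before expanding, whereas you expand first.

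One small bookkeeping correction: on either ray alone the remainder $|u|^\beta|u-1|^\beta-|u|^{2\beta}\sim\pm\beta|u|^{2\beta-1}$ is not integrable. So each outer piece carries an extra $\pm\tfrac12L^{2\beta}$ divergence, which is the second subtracted term in the paper's expansion of $T(R)$, and it cancels between the two rays only because your truncation is symmetric.
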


The first correction is strictly negative for every distinct profile. Below the threshold, the shape dependence is encoded by the nonnegative integral $\mathcal J_{\beta,m}$, whose degree of homogeneity is $1+m\beta$. At criticality, the coefficient depends only on the quadratic discriminant $V(a)$. For reference, the complementary result in \cite[Theorem 1.1]{Fang} is
\begin{equation}\label{eq:supercriticalbackground}
 R_{\beta,m}(\varepsilon;a)
 =1-\frac{\beta^2V(a)}{8(m\beta-1)(2m+1)}\varepsilon^2
      +o(\varepsilon^2),\qquad m\beta>1.
\end{equation}
Thus the three correction scales are $|\varepsilon|^{1+m\beta}$, $\varepsilon^2\log(1/|\varepsilon|)$, and $\varepsilon^2$, respectively.

The proof interpolates between a fused charge of size $m\beta$ at the origin and $m$ charges of size $\beta$ at the displaced points. Differentiating the interpolation parameter produces a sum over environmental particles. Once one such particle is selected, its interaction with the others adds a charge $\beta$ at its location. The remaining normalized-polynomial factors have total exponent $m\beta$, while the circular-Jacobi tilt is $(m+1)\beta$. This strict gap controls the limiting expectation even when the inverse-square moment in the original fused environment diverges. Centering the profile removes the first-order far-field term, leaving an integrable tail after the stochastic-zeta moment bound is applied.

Section~\ref{sec:inputs} records the analytic inputs and their compact-moment consequence. Section~\ref{sec:interpolation} proves the interpolation identity. Sections~\ref{sec:subcritical} and \ref{sec:critical} establish the two asymptotic regimes. Section~\ref{sec:normalization} computes the constants, Section~\ref{sec:pair} evaluates the pair coefficient, and Section~\ref{sec:discussion} discusses the relation with the supercritical pole and further questions.

\section{Circular-Jacobi and stochastic-zeta inputs}\label{sec:inputs}
Let $\mathbb T=[-\pi,\pi)$, $d\mu(\theta)=d\theta/(2\pi)$, and define
\begin{equation}\label{eq:ZN}
 Z_N(q)=\int_{\mathbb T^N}\prod_{j<k}|e^{i\theta_j}-e^{i\theta_k}|^\beta
       \prod_{j=1}^N|1-e^{i\theta_j}|^q\prod_jd\mu(\theta_j),\qquad q\geq0.
\end{equation}
We put $Z_0(q)=1$. Dividing the integrand by $Z_N(2\delta)$ gives the law $\CJ_{N,\beta,\delta}$. Under this law, set
\begin{equation}\label{eq:normalizedpolynomial}
 q_N(z)=\prod_{j=1}^N\frac{z-e^{i\theta_j}}{1-e^{i\theta_j}},
 \qquad f_N(z)=e^{-iz/2}q_N(e^{iz/N}).
\end{equation}
For real $t$, $f_N(t)$ is real and $f_N(0)=1$.

The following facts are the external probabilistic inputs. Under the coupling of \cite[Propositions 2.7--2.8]{AN}, originating in \cite{LV}, $f_N$ converges almost surely, locally uniformly on $\mathbb C$, to $\xi^{\beta,\delta}$ with $\xi^{\beta,\delta}(0)=1$. Its real zeros form $\HP_{\beta,\delta}$. For fixed $0<r\leq2\delta$, the one-point case of \cite[Theorem 2.11]{AN}, on a fundamental interval, gives
\begin{equation}\label{eq:realbound}
 \E|q_N(e^{iy/N})|^r
 \leq\frac{C_{\beta,\delta,r}}{1+|y|^{(2\delta-r)r/\beta}},
 \qquad |y|\leq\pi N.
\end{equation}
For other real $y$, the finite-$N$ bound is interpreted using its representative modulo $2\pi N$. Fatou's lemma gives the corresponding real-line bound for $\xi^{\beta,\delta}$. We also use the exact correlation formula \eqref{eq:ANcorrelation}, proved in \cite[Theorem 1.8]{AN}.

\begin{lemma}[Compact moments]\label{lem:compact}
For $\beta,\delta>0$, $0<r<2\delta$, $R>0$, and every integer $k\geq0$,
\begin{equation}\label{eq:compact}
 \sup_{N\geq1}\E\sup_{|z|\leq R}|f_N^{(k)}(z)|^r<\infty,
 \qquad \E\sup_{|z|\leq R}|(\xi^{\beta,\delta})^{(k)}(z)|^r<\infty.
\end{equation}
\end{lemma}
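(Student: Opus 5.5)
The plan is to pass from the pointwise bound \eqref{eq:realbound} on the real line to a bound on a complex disc, using subharmonicity and Cauchy estimates, and then to pass to the limit $\xi^{\beta,\delta}$ by Fatou's lemma.

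\emph{Step 1: the case $k=0$ at a fixed radius.} Fix $R$ and set $R'=2R+1$. For $|z|\le R'$, write $z=t+is$ and $w=e^{iz/N}$. Then $|w|=e^{-s/N}$ and
\[
 |f_N(z)|=e^{s/2}\,|q_N(w)|\le e^{R'/2}\prod_j\frac{|w-e^{i\theta_j}|}{|1-e^{i\theta_j}|}.
\]
I compare $|w-e^{i\theta_j}|$ with $|e^{it/N}-e^{i\theta_j}|$. The triangle inequality gives
\[
 |w-e^{i\theta_j}|\le |e^{it/N}-e^{i\theta_j}|+\bigl|1-e^{-s/N}\bigr|.
\]
A product over $N$ factors of such additive bounds is not directly controlled. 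I therefore avoid the product structure and use instead that $|f_N|^{r}$ is subharmonic, since $f_N$ is entire. The sub-mean value property over a disc centred at $z$ with radius $\rho=R'-|z|\ge R+1$ gives, for $|z|\le R$,
\[
 |f_N(z)|^{r}\le \frac{1}{\pi}\int_{|\zeta-z|\le 1}|f_N(\zeta)|^{r}\,dA(\zeta).
\]
This is still an average over complex points. To reduce to real points, I use the factorized form. Each factor is $|w-e^{i\theta_j}|^2=(1-|w|)^2+|w|\,|e^{it/N}-e^{i\theta_j}|^2$, up to a harmless identity; more precisely,
\[
 |\rho e^{i\phi}-e^{i\theta}|^2=(1-\rho)^2+\rho\,|e^{i\phi}-e^{i\theta}|^2 .
\]
Hence for $|s|\le R'$ the factor is bounded by
\[
 \max(1,\rho)\,|e^{i\phi}-e^{i\theta}|^2+(1-\rho)^2,\qquad \rho=e^{-s/N},\quad (1-\rho)^2=O(N^{-2}).
\]
\emph{Step 2: the main obstacle.} The main obstacle is controlling the product $\prod_j\bigl(1+c/(N^2|e^{it/N}-e^{i\theta_j}|^2)\bigr)$ uniformly in $N$. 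My first attempt will avoid it altogether. Write $\log|q_N(w)|$ as a sum of logarithmic potentials $\sum_j\log|w-e^{i\theta_j}|$, and use that this sum is the Poisson extension, harmonic in $|w|<1$ away from the zeros, of its boundary values on a slightly smaller or larger circle. Concretely, $\log|f_N|$ is subharmonic, so on the disc $D=\{|z|\le R\}$ Jensen's inequality gives
\[
 |f_N(z)|^r\le \int |f_N(t+is_0)|^r\,P_z(dt)
\]
with the harmonic measure of a strip $\{|{\rm Im}\,\zeta|\le R+1\}$. This reduces matters to horizontal lines ${\rm Im}\,\zeta=\pm(R+1)$. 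On these lines, $q_N$ evaluated at radius $e^{\mp(R+1)/N}$ is a Jacobi-type polynomial at a radial shift of order $1/N$. Its moment is bounded by the real-line moment via the identity above: each factor is at most $(1+O(1/N))$ times the real factor plus $O(N^{-2})$.

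If this factor-by-factor route fails, I will fall back on Hölder's inequality with the exponent $r<2\delta$. Use $r'\in(r,2\delta)$, and bound the expectation along the shifted line through monotonicity of $\rho\mapsto|\rho e^{i\phi}-e^{i\theta}|$ for $\rho\ge1$, together with the symmetry $z\mapsto\bar z$ inherited from the reality of $f_N$ on $\R$. For ${\rm Im}\,\zeta<0$ we have $\rho>1$, and the factor $|w-e^{i\theta}|/|1-e^{i\theta}|$ is at most $\rho$ times the factor at the reflected point $1/\bar w$. Reflection therefore reduces both half-planes to one. Combining this with the harmonic-measure estimate, the sup moment is bounded by a Poisson-weighted integral of \eqref{eq:realbound}. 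That integral converges, since the Poisson kernel of a strip decays exponentially while \eqref{eq:realbound} is uniformly bounded. This yields the $k=0$ case uniformly in $N$.

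\emph{Step 3: derivatives and the limit.} For $k\ge1$, the Cauchy estimate gives
\[
 \sup_{|z|\le R}|f_N^{(k)}(z)|\le k!\,\sup_{|z|\le R+1}|f_N(z)|,
\]
so the claim follows from Step 1 applied with $R+1$. Finally, $f_N\to\xi^{\beta,\delta}$ locally uniformly almost surely, so the derivatives converge uniformly on $|z|\le R$. Fatou's lemma then transfers the uniform bound to $\xi^{\beta,\delta}$.
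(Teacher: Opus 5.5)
\textbf{There is a genuine gap.} Your overall architecture is right: a subharmonic majorization of the supremum, Cauchy estimates for $k\ge1$, and Fatou for $\xi^{\beta,\delta}$. Your reflection $|q_N(w)|=|w|^N|q_N(1/\bar w)|$ is also exactly what is needed to pass from $|w|>1$ to $|w|<1$.

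The central step, however, is never established: you need a bound $\sup_N\sup_t\E|f_N(t+is)|^r<\infty$ off the real axis.
\begin{itemize}
\item Your strip majorization produces a Poisson integral of $|f_N|^r$ over the lines $\mathrm{Im}\,\zeta=\pm(R+1)$, not over $\R$. So \eqref{eq:realbound} cannot be inserted there.
\item The factor-by-factor comparison cannot close this. No pathwise inequality $|q_N(w)|\le C|q_N(e^{it/N})|$ holds: when a zero sits at $e^{it/N}$, the real-axis factor vanishes while the shifted factor is of order $1/N$. This is your ``product obstacle'', and the additive $O(N^{-2})$ terms cannot be absorbed into it.
\item The monotonicity of $\rho\mapsto|\rho e^{i\phi}-e^{i\theta}|$ for $\rho\ge1$ only gives the lower bound $|w-e^{i\theta}|\ge|w/|w|-e^{i\theta}|$, which is the wrong direction. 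After reflection you are inside the disc, where no such monotonicity holds.
\item H\"older with $r'\in(r,2\delta)$ does not address the off-axis bound either.
\end{itemize}

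\textbf{The missing idea} is to use subharmonicity a second time, on the unit disc, with the circle as boundary. Since $|q_N|^r$ is subharmonic on $|w|<1$ and continuous up to $|w|=1$,
\[
|q_N(w)|^r\le\int P(w,e^{iu})\,|q_N(e^{iu})|^r\,d\mu(u).
\]
Take expectations using Fubini and the uniform circle bound $\sup_u\E|q_N(e^{iu})|^r\le C_{\beta,\delta,r}$ from \eqref{eq:realbound}. This gives $\E|q_N(w)|^r\le C$ for all $|w|<1$, uniformly in $N$, with no comparison of individual factors.

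Combined with your reflection, this yields $\E|f_N(x+iy)|^r\le Ce^{r|y|/2}$. Only then does the majorization over a disc of radius $R+1$ control the supremum. Your strip also works at this stage, provided you note that $|f_N|$ is bounded on it because $|f_N|$ is $2\pi N$-periodic.

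The two steps cannot be merged into a single Poisson integral over $\R$. The half-plane Poisson kernel $\frac{y}{\pi((x-t)^2+y^2)}$ is unbounded as $z$ approaches $[-R,R]$, so the pointwise off-axis moment bound has to come first. This is the paper's route, and it also works at $r=2\delta$.
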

\begin{proof}
For $0<r\leq2\delta$, \eqref{eq:realbound} implies a uniform bound on the circle. Subharmonicity of $|q_N|^r$ and the Poisson inequality extend this bound to the unit disk. Because all zeros of $q_N$ are on the unit circle,
\[
 |q_N(z)|=|z|^N|q_N(1/\overline z)|,\qquad |z|>1.
\]
The normalization in \eqref{eq:normalizedpolynomial} therefore gives
\[
 \E|f_N(x+iy)|^r\leq C e^{r|y|/2}.
\]
This is the complex-plane estimate used in \cite[Proposition 2.10, equations (37)--(38)]{AN}. Fatou's lemma gives it for the limiting entire function as well.

For an entire function $g$, $|g|^r$ is subharmonic. The Poisson inequality on a disk of radius $R+1$ bounds its supremum on $|z|\leq R$ by a constant times the boundary integral on the larger circle. Taking expectations yields \eqref{eq:compact} for $k=0$. Cauchy's estimate on concentric disks gives the assertion for every fixed $k$.
\end{proof}

\begin{lemma}[Rotational normalization]\label{lem:rotation}
For $q>0$ and every real $u$,
\begin{equation}\label{eq:rotation}
 \E_{\CJ_{N,\beta,q/2}}|q_N(e^{iu})|^q=1.
\end{equation}
Consequently, if $r_j\geq0$ and $\sum_jr_j\leq q$, then
\[
 \E_{\CJ_{N,\beta,q/2}}\prod_j|q_N(e^{iu_j})|^{r_j}\leq1.
\]
\end{lemma}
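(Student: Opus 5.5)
The plan is to write the expectation as a ratio of circular-Jacobi partition functions, use rotation invariance of the Haar part of the measure, and then get the inequality from Hölder.

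\textbf{Step 1: the identity \eqref{eq:rotation}.} By definition,
\[
 \E_{\CJ_{N,\beta,q/2}}|q_N(e^{iu})|^q
 =\frac1{Z_N(q)}\int_{\mathbb T^N}\prod_{j<k}|e^{i\theta_j}-e^{i\theta_k}|^\beta
 \prod_j|1-e^{i\theta_j}|^q\,
 \prod_j\frac{|e^{iu}-e^{i\theta_j}|^q}{|1-e^{i\theta_j}|^q}\prod_j d\mu(\theta_j).
\]
The tilt factor $\prod_j|1-e^{i\theta_j}|^q$ cancels against the denominators of $|q_N|^q$. Hence the numerator is
\[
 \int\prod_{j<k}|e^{i\theta_j}-e^{i\theta_k}|^\beta\prod_j|e^{iu}-e^{i\theta_j}|^q\,d\mu^{\otimes N}.
\]
The substitution $\theta_j\mapsto\theta_j+u$ preserves $\mu^{\otimes N}$ and the Vandermonde factor. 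It also sends $|e^{iu}-e^{i(\theta_j+u)}|$ to $|1-e^{i\theta_j}|$. The numerator is therefore $Z_N(q)$, and the ratio equals $1$.

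The only technical point is that cancelling $|1-e^{i\theta_j}|^q$ is legitimate. The set where some $\theta_j=0$ is null, and every integrand is nonnegative, so Tonelli applies. Finiteness and positivity of $Z_N(q)$ hold for $q\ge0$, since the integrand is bounded and positive almost everywhere.

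\textbf{Step 2: the inequality.} Write $X_j=|q_N(e^{iu_j})|$ and $s=\sum_jr_j\le q$. If $s=0$, the product is $1$ and there is nothing to prove. Otherwise set $p_j=s/r_j$ for the indices with $r_j>0$, so that $\sum_j1/p_j=1$. The generalized Hölder inequality gives
\[
 \E\prod_jX_j^{r_j}\le\prod_j\bigl(\E X_j^{s}\bigr)^{r_j/s}.
\]
Since $s\le q$, Jensen's inequality (or Lyapunov's) gives $\E X_j^s\le(\E X_j^q)^{s/q}$. By Step 1, $\E X_j^q=1$. Each factor on the right is therefore at most $1$, and so is the product.

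I do not expect a genuine obstacle. The one thing to watch is the a.e.\ cancellation in Step 1, together with allowing equal or repeated $u_j$, which Hölder handles without change.
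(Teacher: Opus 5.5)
Your proof is correct and follows the paper's argument. Both cancel the circular-Jacobi tilt against the denominators of $|q_N(e^{iu})|^q$, use rotation invariance to return to $Z_N(q)$, and then apply generalized H\"older. The only cosmetic difference is how the case $\sum_j r_j<q$ is handled: the paper pads the product with a constant factor of exponent $q-\sum_j r_j$ and applies H\"older directly at level $q$, while you apply H\"older at level $s$ and then Lyapunov; the two are equivalent.
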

\begin{proof}
Multiplication by $|q_N(e^{iu})|^q$ replaces the weight at $1$ by the same weight at $e^{iu}$. Rotational invariance of the unweighted circular ensemble proves \eqref{eq:rotation}. Generalized H\"older, adding a constant factor when the sum of the exponents is smaller than $q$, proves the second statement.
\end{proof}
\section{A one-particle interpolation identity}\label{sec:interpolation}
Write $\alpha=m\beta$ and use the centered profile $b$ from \eqref{eq:geometry}. Define
\begin{gather*}
 P_t(x)=\prod_{i=1}^m|x-tb_i|^\beta,\qquad
 W_{s,t}(x)=|x|^{\alpha(1-s)}P_t(x)^s,\\
 L_t(x)=\log\frac{P_t(x)}{|x|^\alpha},\qquad 0\leq s\leq1.
\end{gather*}
Values at the finitely many zeros may be assigned arbitrarily when these quantities appear under an integral. Let
\[
 \zeta=\xi^{\beta,(\alpha+\beta)/2}
\]
and define the nonnegative interpolation kernel
\begin{equation}\label{eq:K}
 K_{s,t}(x)=\E\left[
 |\zeta(-x)|^{\alpha(1-s)}
 \prod_{i=1}^m|\zeta(tb_i-x)|^{\beta s}\right].
\end{equation}
Factors with exponent zero are omitted.

\begin{lemma}[Interpolation identity and kernel bounds]\label{lem:interpolation}
For $0<\alpha\leq1$ and sufficiently small $t>0$,
\begin{equation}\label{eq:identity}
 R_{\beta,m}(t;b)-1
 =\kappa_{\beta,m}\int_\R\int_0^1
       W_{s,t}(x)L_t(x)K_{s,t}(x)\dd s\dd x.
\end{equation}
The double integral is absolutely convergent. Uniformly over bounded centered profiles and sufficiently small $t$,
\begin{equation}\label{eq:Kbounds}
 0\leq K_{s,t}(x)\leq1,
 \qquad K_{s,t}(x)\leq\frac{C}{1+|x|^\alpha}.
\end{equation}
Also $K_{s,t}(x)\to1$ as $(t,x)\to(0,0)$, uniformly in $s\in[0,1]$ and bounded profiles. At $\alpha=1$, this improves to
\begin{equation}\label{eq:Klipschitz}
 |K_{s,t}(x)-1|\leq C(|x|+t),\qquad |x|+t\leq r_0.
\end{equation}
\end{lemma}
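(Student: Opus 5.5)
The plan is to prove \eqref{eq:identity} first at finite $N$ in the circular-Jacobi model and then let $N\to\infty$. The kernel bounds come from Lemma~\ref{lem:rotation}, the moment bound \eqref{eq:realbound} with $\delta=(\alpha+\beta)/2$, and Lemma~\ref{lem:compact}.

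\emph{Finite-$N$ identity.} Work under $\CJ_{N,\beta,\alpha/2}$, so the fused charge $\alpha$ sits at $1$. Define the finite analogue of $L_t$:
\[
 L^{(N)}_t(\theta)=\log\frac{\prod_i|e^{i\theta}-e^{itb_i/N}|^\beta}{|1-e^{i\theta}|^\alpha}.
\]
Set $G_N(s)=\E\prod_{j}e^{sL^{(N)}_t(\theta_j)}$. Since $\prod_j\prod_i|e^{i\theta_j}-e^{itb_i/N}|^\beta$ equals $\prod_i|q_N(e^{itb_i/N})|^\beta$ times deterministic factors, $G_N(1)$ is a finite-$N$ version of the expectation in \eqref{eq:ANcorrelation}. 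By the coupling of \cite{AN} and \eqref{eq:ANcorrelation}, $G_N(1)\to R_{\beta,m}(t;b)$ after these normalizations, and $G_N(0)=1$.

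Write $G_N(1)-1=\int_0^1G_N'(s)\dd s$. Differentiating inside the expectation, which is legitimate because the integrand has only logarithmic singularities, gives $G_N'(s)=\E[\sum_jL^{(N)}_t(\theta_j)\prod_ke^{sL^{(N)}_t(\theta_k)}]$. By exchangeability this is $N$ times the term with $j=1$.

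Now integrate out $\theta_2,\dots,\theta_N$ with $\theta_1$ fixed. These particles feel the weight $|1-e^{i\theta}|^{\alpha(1-s)}\prod_i|e^{i\theta}-e^{itb_i/N}|^{\beta s}|e^{i\theta}-e^{i\theta_1}|^\beta$, of total charge $\alpha+\beta$. Rotate so that the selected particle sits at $1$, and divide by $Z_{N-1}(\alpha+\beta)$. The inner expectation becomes exactly the finite-$N$ kernel $K^{(N)}_{s,t}$ of an ensemble $\CJ_{N-1,\beta,(\alpha+\beta)/2}$, evaluated at the points $-\theta_1$ and $tb_i/N-\theta_1$. Next substitute $\theta_1=x/N$. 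The prefactor $N\cdot\frac1N\cdot Z_{N-1}(\alpha+\beta)/Z_N(\alpha)$ times the powers of $N$ from rescaling $W$ converges to $\kappa_{\beta,m}$. This limit is the computation of Section~\ref{sec:normalization}, and it uses $\kappa_{\beta,m}=C^{(m+1)}_\beta/C^{(m)}_\beta$. What remains is $\int W_{s,t}L_tK^{(N)}_{s,t}$ with finite-$N$ versions of $W$ and $L$.

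\emph{Kernel bounds.} The bound $0\le K\le1$ is Lemma~\ref{lem:rotation} with $q=\alpha+\beta$, since the exponents sum to $\alpha<\alpha+\beta$; it holds uniformly in $N$. For the decay, apply generalized H\"older to reduce to single-point moments $\E|q_{N-1}(e^{i(y)/N})|^{\alpha}$ at $y=-x$ and $y=tb_i-x$. With $r=\alpha$ and $2\delta=\alpha+\beta$, \eqref{eq:realbound} gives the exponent $(2\delta-r)r/\beta=\alpha$. Hence $K\le C(1+|x|^\alpha)^{-1}$, uniformly for bounded $tb$ and $|x|\le\pi N$, and also in the limit by Fatou.

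For continuity at $(0,0)$, note that $\zeta(0)=1$ and $\zeta$ is continuous. Uniform integrability follows from Lemma~\ref{lem:compact} with some $r\in(\alpha,\alpha+\beta)$; this is where the strict gap is used. At $\alpha=1$ every exponent is at most $1$, so $\bigl||\zeta(y)|^{p}-1\bigr|\le|\zeta(y)-1|\,(1+|\zeta(y)|)$ for $p\le1$. A telescoping product estimate then bounds $|K-1|$ by $C(|x|+t)\,\E[\sup_{|z|\le r_0}|\zeta'|\,(1+\sup|\zeta|)^{m+1}]$. This is finite because H\"older with total exponent slightly above $1$ stays below $\alpha+\beta=1+\beta$, and Lemma~\ref{lem:compact} applies. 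That proves \eqref{eq:Klipschitz}.

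\emph{Integrability and the limit $N\to\infty$.} Centering gives $\sum b_i=0$, so for $|x|\ge2t\max|b_i|$ we have $L_t(x)=O(t^2/x^2)$. Also $W_{s,t}(x)\le C|x|^{\alpha}$ there. Combined with $K\le C|x|^{-\alpha}$, the integrand is $O(t^2x^{-2})$ at infinity. Near the zeros $tb_i$ and $0$, the only singularities are $|\log|$ times bounded powers, which are integrable. This gives absolute convergence of the double integral in \eqref{eq:identity}, and the same majorant, uniform in $N$, dominates the finite-$N$ integrands. Since $K^{(N)}\to K$ pointwise by the almost-sure coupling together with uniform integrability, and the finite-$N$ versions of $W$ and $L$ converge locally uniformly, dominated convergence yields \eqref{eq:identity}.

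The main obstacle is this domination step. It requires the far field $\pi N\ge|x|\gg1$ to be controlled uniformly in $N$, including the chordal distortion of $W$ and $L$ and the periodic interpretation of \eqref{eq:realbound}. I would handle it by comparing the chordal quantities with $|x|$ via $|e^{iy/N}-1|\asymp|y|/N$ on $|y|\le\pi N$, and by using the second-order cancellation in $L^{(N)}$ to retain an $O(t^2x^{-2})$ bound there.
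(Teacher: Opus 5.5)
Your architecture matches the paper's. Your $G_N(s)$ is exactly $R_{N,s}(t)$, and the particle selection, the bound $K\le1$ via Lemma~\ref{lem:rotation} with $q=\alpha+\beta$, the decay via \eqref{eq:realbound} with $r=\alpha$, uniform integrability from the margin $\alpha<q<\alpha+\beta$, and the centered $t^2/x^2$ majorant all agree with it.

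The step that fails is your proof of \eqref{eq:Klipschitz}. Using $\bigl||\zeta|^p-1\bigr|\le|\zeta-1|(1+|\zeta|)$ and a crude telescoping bound, you need $\E\bigl[\sup|\zeta'|(1+\sup|\zeta|)^{m+1}\bigr]<\infty$, which is a moment of total degree $m+2$. Lemma~\ref{lem:compact} controls $\sup|\zeta|$ and $\sup|\zeta'|$ only in $L^r$ for $r<\alpha+\beta=1+1/m$, so no H\"older splitting makes this finite. Your phrase ``total exponent slightly above $1$'' does not describe this expression. The quantity is in fact expected to be infinite: a zero of $\zeta$ at distance $\eta$ from the origin makes both suprema of order $1/\eta$, and such zeros occur with density of order $\eta^{1+\beta}$. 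The sharper inequality $|v^p-1|\le|v-1|$ for $0\le p\le1$ does not rescue the telescoping sum either. Its prefix products carry exponents up to $1-\max_jp_j$, and for $s\ge 1/m$ this is $\ge\beta$. H\"older against $\sup|\zeta'|\in L^{q}$ with $q<1+\beta$ requires such prefix exponents to be strictly below $\beta$.

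The missing observation is that at $\alpha=1$ the exponents $1-s$ and $s/m$ (the latter $m$ times) sum to exactly $1$. The integrand of $K_{s,t}$ is therefore a weighted geometric mean of the values $|\zeta(z_j)|$, with $z_j\in\{-x,\,tb_i-x\}$. It lies between their minimum and maximum, so with $M=\max_i|b_i|$ and a fixed radius $R$ containing all $z_j$,
\[
 \Bigl|\prod_j|\zeta(z_j)|^{r_j}-1\Bigr|\le\max_j|\zeta(z_j)-1|\le(|x|+Mt)\sup_{|z|\le R}|\zeta'(z)|.
\]
Only $\E\sup|\zeta'|<\infty$ is then needed, and Lemma~\ref{lem:compact} supplies it because $1<1+\beta$.

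A smaller point concerns the endpoint $s=1$. As written, $G_N(1)=\E\prod_{i=1}^m|q_N(e^{itb_i/N})|^\beta$ exactly, with no leftover deterministic factors. Its total exponent is $\alpha=2\delta$, where Lemma~\ref{lem:compact} gives no uniform integrability. You should first rotate one inserted point to the origin. This is exact at finite $N$ and lowers the exponent to $(m-1)\beta<\alpha$. After that, the coupling (or \cite[Proposition 2.14]{AN}) yields $G_N(1)\to R_{\beta,m}(t;b)$.
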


\subsection{Finite-dimensional derivation}
Use the partition function $Z_N(q)$ from \eqref{eq:ZN}.
Define a partition-function ratio using $N$ environmental particles:
\begin{align*}
 R_{N,s}(t)&=\frac1{Z_N(\alpha)}
  \int_{\mathbb T^N}\prod_{j<k}|e^{i\theta_j}-e^{i\theta_k}|^\beta
                         \prod_{j=1}^N w_{N,s,t}(\theta_j)\prod_jd\mu(\theta_j),\\
 w_{N,s,t}(\theta)&=|1-e^{i\theta}|^{\alpha(1-s)}
                       \prod_i|e^{itb_i/N}-e^{i\theta}|^{\beta s}.
\end{align*}
Then $R_{N,0}=1$. Differentiation with respect to $s$ introduces only logarithmic factors. For fixed $N,t$, all powers in $w_{N,s,t}$ are nonnegative; the Vandermonde is bounded and logarithmic singularities on the circle are integrable. Differentiation and integration in $s$ are therefore justified, including the one-sided endpoint derivatives.

Put $d_N(x)=2N|\sin(x/(2N))|$ and
\[
 P_{N,t}(x)=\prod_i d_N(x-tb_i)^\beta,\quad
 W_{N,s,t}(x)=d_N(x)^{\alpha(1-s)}P_{N,t}(x)^s,\quad
 L_{N,t}(x)=\log\frac{P_{N,t}(x)}{d_N(x)^\alpha}.
\]
Choose one particle in the derivative, write its coordinate as $x/N$, and rotate the other $N-1$ particles by $-x/N$. The interaction with the chosen particle supplies an additional power $\beta$ at its location. Factor out a fused weight of total exponent $\alpha+\beta$ at that location, and use $\CJ_{N-1,\beta,(\alpha+\beta)/2}$ as the reference measure for the remaining particles. The ratios of the displaced weights to the fused weight are normalized-polynomial factors. Consequently,
\begin{equation}\label{eq:finiteid}
 R_{N,1}(t)-1=\kappa_N\int_{-\pi N}^{\pi N}\int_0^1
 W_{N,s,t}(x)L_{N,t}(x)K_{N,s,t}(x)\dd s\dd x,
\end{equation}
where
\begin{equation}\label{eq:kappaNdef}
 \kappa_N=\frac{Z_{N-1}(\alpha+\beta)}{2\pi N^\alpha Z_N(\alpha)}
\end{equation}
and, for the normalized circular-Jacobi polynomial $q_{N-1}$,
\[
 K_{N,s,t}(x)=\E\left[
 |q_{N-1}(e^{-ix/N})|^{\alpha(1-s)}
 \prod_i|q_{N-1}(e^{i(tb_i-x)/N})|^{\beta s}\right].
\]
For $N=1$, the empty polynomial is one. The factor $N$ from choosing a particle cancels the $1/N$ in $d\theta=dx/N$; the remaining $N^{-\alpha}$ comes from the weight of that particle. This explains every scaling factor in \eqref{eq:kappaNdef}.

\subsection{Bounds and passage to the Sine limit}
The total exponent in $K_{N,s,t}$ is $\alpha$, whereas the circular-Jacobi tilt is $\alpha+\beta$. Lemma~\ref{lem:rotation} gives, exactly at finite $n$,
\[
 \E_{\CJ_{n,\beta,q/2}}|q_n(e^{iu})|^q=1.
\]
Generalized H\"older, with $q=\alpha+\beta$ and an extra constant factor, therefore gives $K_{N,s,t}\leq1$.

Equation~\eqref{eq:realbound} gives, for $n=N-1$,
\[
 \E_{\CJ_{n,\beta,(\alpha+\beta)/2}}
       |q_n(e^{iy/N})|^\alpha
 \leq\frac{C}{1+\operatorname{dist}(y,2\pi N\mathbb Z)^\alpha}.
\]
Here and below the bound is used on circular representatives; scaling by $n/N$ only changes constants for $N\geq2$. Apply H\"older with the individual exponents divided by $\alpha$. For $x\in[-\pi N,\pi N]$ and bounded $tb_i$, the circular distances of $-x$ and $tb_i-x$ are comparable to $|x|$ away from a bounded interval. This proves the finite-$N$ analogue of the second estimate in \eqref{eq:Kbounds}, uniformly in $s$. It uses a single fixed moment exponent, so no uniformity of theorem constants in varying exponents is needed.

By Section~\ref{sec:inputs}, the normalized entire functions admit a locally uniform coupling limit. For any
\begin{equation}\label{eq:margin}
 \alpha<q<\alpha+\beta,
\end{equation}
their compact suprema have bounded $q$th moments by Lemma~\ref{lem:compact}, applied with $2\delta=\alpha+\beta$. Since the total exponent in the kernel is $\alpha$, \eqref{eq:margin} gives uniform integrability. Thus $K_{N,s,t}(x)\to K_{s,t}(x)$ for fixed $s,t,x$, and proves the bounds and local continuity asserted in \eqref{eq:Kbounds}. The same compact majorant gives uniformity in $s$ and bounded profiles near $(t,x)=(0,0)$, where the limiting entire function equals one.

To justify convergence of the integrals, let $M\geq1$ bound $\max_i|b_i|$. Since $\sum_i b_i=0$, Taylor's formula for $\log d_N$ gives, when $|x|>2Mt$,
\begin{equation}\label{eq:tailfinite}
 |L_{N,t}(x)|\leq C\frac{t^2}{x^2},\qquad
 W_{N,s,t}(x)\leq C|x|^\alpha.
\end{equation}
Indeed $(\log d_N)''(x)=-[4N^2\sin^2(x/(2N))]^{-1}$ away from its zeros. Combining \eqref{eq:tailfinite} with the kernel bound gives an integrable $Ct^2/x^2$ majorant outside a fixed interval. On a fixed bounded interval, the weights are bounded and logarithmic singularities at $0,tb_1,\ldots,tb_m$ have an integrable majorant, uniform for sufficiently large $N$. Dominated convergence applies for every fixed $t$.

Finally, rotate one of the $m$ inserted points to zero in $R_{N,1}$. Its expression is then an expectation under $\CJ_{N,\beta,\alpha/2}$ of normalized-polynomial powers of total exponent $(m-1)\beta<\alpha$. The same coupling and moment argument, or \cite[Proposition 2.14]{AN}, gives
\[
 R_{N,1}(t)\longrightarrow R_{\beta,m}(t;b).
\]
Section~\ref{sec:normalization} shows $\kappa_N\to\kappa_{\beta,m}$, proving \eqref{eq:identity}.

At criticality, choose $1<q<1+\beta$. Lemma~\ref{lem:compact} gives
\[
 \E\sup_{|z|\leq r_1}|\zeta'(z)|<\infty.
\]
For nonnegative $v_j$ and weights $r_j\geq0$ with $\sum_jr_j=1$, their weighted geometric mean lies between $\min_jv_j$ and $\max_jv_j$. Thus
\[
 \left|\prod_j|\zeta(z_j)|^{r_j}-1\right|
 \leq\max_j|\zeta(z_j)-1|
 \leq\max_j|z_j|\sup_{|z|\leq r_1}|\zeta'(z)|.
\]
Use $z_j=-x,tb_i-x$ and the exponents in \eqref{eq:K} to obtain \eqref{eq:Klipschitz}. This completes the proof of Lemma~\ref{lem:interpolation}.

\section{The subcritical asymptotic}\label{sec:subcritical}
Assume $0<\alpha<1$. For $P(u)=\prod_i|u-b_i|^\beta$, centering gives
\begin{equation}\label{eq:tailP}
 P(u)=|u|^\alpha\left(1-\frac{\beta B}{2u^2}+O(|u|^{-3})\right),
 \qquad |u|\to\infty.
\end{equation}
Hence $P(u)-|u|^\alpha$ is absolutely integrable. For almost every $u$,
\begin{align}
 \int_0^1 |u|^{\alpha(1-s)}P(u)^s
       \log\frac{P(u)}{|u|^\alpha}\dd s&=P(u)-|u|^\alpha,\label{eq:signedint}\\
 \int_0^1 |u|^{\alpha(1-s)}P(u)^s
       \left|\log\frac{P(u)}{|u|^\alpha}\right|\dd s&=|P(u)-|u|^\alpha|.\label{eq:absoluteint}
\end{align}
The second identity holds because the logarithm has one sign for each fixed $u$.

In \eqref{eq:identity}, set $x=tu$. The factor outside the integral is $t^{\alpha+1}$. For fixed $u$, $K_{s,t}(tu)\to1$, uniformly in $s$. The bound $K\leq1$ and \eqref{eq:absoluteint} give an integrable majorant. Therefore
\begin{equation}\label{eq:rawcoefficient}
 \lim_{t\downarrow0}\frac{R_{\beta,m}(t;b)-1}{t^{\alpha+1}}
 =\kappa_{\beta,m}\int_\R\{P(u)-|u|^\alpha\}\dd u.
\end{equation}

To identify its sign, put $A(u)=m^{-1}\sum_i|u-b_i|^\alpha$. By centering, $A(u)-|u|^\alpha$ is also absolutely integrable. For $L>\max|b_i|$, direct integration gives
\[
 \int_{-L}^L (|u-c|^\alpha-|u|^\alpha)\dd u
 =\frac{(L+c)^{\alpha+1}+(L-c)^{\alpha+1}-2L^{\alpha+1}}{\alpha+1}
 =O(L^{\alpha-1}).
\]
Average this equality over $c=b_i$, then let $L\to\infty$. It follows that
\[
 \int_\R(P(u)-|u|^\alpha)\dd u
 =-\int_\R(A(u)-P(u))\dd u=-\mathcal J_{\beta,m}(b).
\]
The arithmetic--geometric mean inequality gives $A\geq P$, because $\alpha/m=\beta$. Distinct profile entries imply strict inequality on a set of positive measure. Moreover,
\[
 A(u)-P(u)=\frac{\beta^2 V(a)}2|u|^{\alpha-2}
                         +O(|u|^{\alpha-3}),
\]
so the deficit is integrable and has a strictly positive integral. This proves \eqref{eq:sub}. Translation invariance follows by a change of integration variable in \eqref{eq:J}, and scaling gives its asserted homogeneity.

For local uniformity, on bounded centered profiles the tail bound in \eqref{eq:tailP} is uniform. On bounded $u$ intervals, use \eqref{eq:absoluteint} and the uniform convergence of $K$ near the origin. These control the integrated error uniformly, proving the stated remainder claim.

\section{Criticality and the logarithmic coefficient}\label{sec:critical}
Let $\alpha=1$, so $\beta=1/m$. Fix $r\in(0,r_0/2)$ and take $t$ sufficiently small. By \eqref{eq:Klipschitz} and \eqref{eq:absoluteint},
\begin{align*}
 &\left|\int_{|x|<r}\int_0^1 W_{s,t}(x)L_t(x)(K_{s,t}(x)-1)\dd s\dd x\right|\\
 &\hspace{25mm}\leq C\int_{|x|<r}(|x|+t)|P_t(x)-|x||\dd x=O(t^2).
\end{align*}
To verify the last bound, use $|P_t(x)-|x||\leq Ct$ for $|x|\leq2Mt$, and $|P_t(x)-|x||\leq Ct^2/|x|$ for $2Mt<|x|<r$. On $|x|\geq r$, the tail estimates and the decay of $K$ give an $O(t^2)$ contribution. Therefore
\begin{equation}\label{eq:criticallocal}
 R_{1/m,m}(t;b)-1
 =\kappa_{1/m,m}\int_{|x|<r}(P_t(x)-|x|)\dd x+O(t^2).
\end{equation}
For $2Mt<|x|<r$, \eqref{eq:tailP} yields
\[
 P_t(x)-|x|=-\frac{\beta B}{2}\frac{t^2}{|x|}
                           +O\left(\frac{t^3}{x^2}\right).
\]
Integrate over both signs of $x$. The region $|x|\leq2Mt$ contributes $O(t^2)$, and hence
\[
 \int_{|x|<r}(P_t(x)-|x|)\dd x
 =-\beta B\,t^2\log(1/t)+O(t^2).
\]
Using \eqref{eq:kappa},
\[
 \kappa_{1/m,m}=\frac1{8(2m+1)},\qquad
 \beta B=\frac{V(a)}{m^2},
\]
which proves \eqref{eq:crit}. All constants above can be chosen uniformly on bounded centered profiles. Notice that the $O(t^2)$ remainder uses the proved derivative bound \eqref{eq:Klipschitz}; continuity alone would only give an $o(t^2\log(1/t))$ error.

\section{Normalization constants}\label{sec:normalization}
Put $h=\beta/2$ and $g_j=\Gamma(1+jh)$. The circular Selberg--Morris normalization \cite{Forrester} is
\begin{equation}\label{eq:Morris}
 Z_N(0)=\frac{\Gamma(1+Nh)}{\Gamma(1+h)^N},\qquad
 \frac{Z_N(q)}{Z_N(0)}=
 \prod_{j=0}^{N-1}\frac{\Gamma(1+jh)\Gamma(1+q+jh)}
                         {\Gamma(1+q/2+jh)^2}.
\end{equation}
At $q=2mh$, the second expression telescopes:
\[
 M_N(2mh):=\frac{Z_N(2mh)}{Z_N(0)}
 =\frac{\prod_{j=N+m}^{N+2m-1}g_j}{\prod_{j=N}^{N+m-1}g_j}
    \frac{\prod_{j=0}^{m-1}g_j}{\prod_{j=m}^{2m-1}g_j}.
\]
Substitution in \eqref{eq:kappaNdef} gives the exact finite-$N$ value
\begin{equation}\label{eq:kappaN}
 \kappa_N=\frac1{2\pi N^\alpha}
 \frac{\Gamma(1+h)\Gamma(1+(N+2m)h)}{\Gamma(1+Nh)}
 \frac{\Gamma(1+mh)^2}{\Gamma(1+2mh)\Gamma(1+(2m+1)h)}.
\end{equation}
The gamma-ratio asymptotic proves \eqref{eq:kappa}. Alternatively, the leading correlation constant can itself be written without the auxiliary Barnes-type function:
\begin{equation}\label{eq:Cgamma}
 C_\beta^{(m)}=
 \frac{h^{h m(m-1)}\Gamma(1+h)^m}{(2\pi)^m}
       \frac{\prod_{j=0}^{m-1}\Gamma(1+jh)}
            {\prod_{j=m}^{2m-1}\Gamma(1+jh)}.
\end{equation}
It follows by taking the fusion limit in the finite circular correlation density and then applying \eqref{eq:Morris}. Dividing the formula for $m+1$ by that for $m$ gives $\kappa_{\beta,m}$ directly. At criticality, \eqref{eq:kappaN} also yields the useful exact check
\[
 \kappa_N=\frac1{8(2m+1)}\left(1+\frac{2m}{N}\right).
\]

\section{Closed evaluation of the pair coefficient}\label{sec:pair}
We now prove Corollary~\ref{cor:pair}. In this section, $\mathrm B$ denotes the Euler beta function.
\begin{proof}
For $m=2$, the deficit is
\[
 \mathcal J_{\beta,2}(0,1)=\frac12\int_\R
                  (|u|^\beta-|u-1|^\beta)^2\dd u.
\]
Split at $0,1$. The middle interval contributes
\[
 \frac1{2\beta+1}-\mathrm B(\beta+1,\beta+1).
\]
The two exterior intervals together contribute
\[
 H=\int_0^\infty((x+1)^\beta-x^\beta)^2\dd x.
\]
For completeness, write $T(R)=\int_0^R x^\beta(1+x)^\beta\dd x$. Substituting $y=x/(1+x)$, and subtracting the first two endpoint divergences, gives
\[
 T(R)=\frac{R^{2\beta+1}}{2\beta+1}+\frac12R^{2\beta}
       +\frac{\Gamma(1+\beta)\Gamma(-1-2\beta)}{\Gamma(-\beta)}+o(1).
\]
This is also the finite-part Euler beta integral, with its two displayed divergent terms; it follows by integrating the first two powers in the endpoint expansion, the residual being integrable. Integrating the two square terms in $H$ explicitly and cancelling their endpoint powers yields
\[
 H=-\frac1{2\beta+1}
       -2\frac{\Gamma(1+\beta)\Gamma(-1-2\beta)}{\Gamma(-\beta)}.
\]
The gamma reflection identity then gives
\[
 \mathcal J_{\beta,2}(0,1)
 =\mathrm B(\beta+1,\beta+1)\bigl(\sec(\pi\beta)-1\bigr).
\]
Equations \eqref{eq:pairsub}--\eqref{eq:paircrit} now follow from Theorem~\ref{thm:main}.
\end{proof}

\section{The inverse-square threshold and further questions}\label{sec:discussion}
The two asymptotic regimes arise from the tail of the same elementary collision kernel. For centered profiles,
\[
 P(u)-|u|^\alpha=-\frac{\beta B}{2}|u|^{\alpha-2}
                          +O(|u|^{\alpha-3}).
\]
When $\alpha<1$, this tail is integrable, and rescaling $x=\varepsilon u$ gives the entire first correction as a finite one-dimensional integral. When $\alpha=1$, the tail is proportional to $1/|u|$; the interval between the collision scale and a fixed microscopic scale contributes the logarithm. The local Lipschitz estimate for the interpolation kernel controls the error at order $\varepsilon^2$.

For comparison, the inverse-square identity in \cite[Theorem 1.3]{Fang} is
\[
 \E_{\HP_{\beta,m\beta/2}}\sum_x\frac1{x^2}
       =\frac{m\beta}{4(m\beta-1)(2m+1)},\qquad m\beta>1.
\]
The critical coefficient obtained here agrees with the residue of the normalized supercritical coefficient:
\begin{equation}\label{eq:residue}
 \lim_{\beta\downarrow1/m}(m\beta-1)
 \frac{\beta^2V(a)}{8(m\beta-1)(2m+1)}
       =\frac{V(a)}{8m^2(2m+1)}.
\end{equation}
The logarithmic law was proved directly in Section~\ref{sec:critical}; the residue identity is a consistency check. In the pair case, the subcritical expression gives a matching check from the other side:
\[
 \lim_{\beta\uparrow1/2}(1-2\beta)D_\beta=\frac1{160}.
\]
Indeed $\mathrm B(3/2,3/2)=\pi/8$, $\kappa_{1/2,2}=1/40$, and
$\sec(\pi\beta)\sim2/[\pi(1-2\beta)]$.

\subsection*{Further questions}
The present results take $\beta$ fixed before sending $\varepsilon$ to zero. A uniform transition formula when $(m\beta-1)\log(1/|\varepsilon|)$ remains bounded would require joint parameter control of the interpolation kernel. The separate asymptotic formulas established here do not justify interchanging these limits.

At criticality, the next task is to identify the coefficient of the $\varepsilon^2$ term after the logarithm has been removed. The interpolation identity separates the explicit local singularity from a finite contribution involving the full stochastic-zeta kernel, which provides a starting point for such a calculation. Higher collision coefficients and corresponding integrability thresholds can also be investigated through further derivatives or particle selections in the finite-dimensional identity.

For general beta log-gases, a quantitative transfer of the new coefficients from the exact $\Sine_\beta$ limit would require universality estimates that remain valid as the correlation arguments merge. Ordinary convergence at fixed distinct arguments alone does not provide the required rate relative to the vanishing Vandermonde factor.

\section*{AI-assisted tools}
OpenAI ChatGPT was used for exploratory derivations, algebraic consistency checks, literature navigation, and manuscript preparation. The author is responsible for the mathematical claims and references.

\end{document}